\newcommand{\bburl}[1]{\textcolor{blue}{\url{#1}}}
\newcommand\be{\begin{equation}}
\newcommand\ee{\end{equation}}
\newcommand\bea{\begin{eqnarray}}
\newcommand\eea{\end{eqnarray}}
\newcommand\bi{\begin{itemize}}
\newcommand\ei{\end{itemize}}
\newcommand\ben{\begin{enumerate}}
\newcommand\een{\end{enumerate}}
\newcommand\bc{\begin{center}}
\newcommand\ec{\end{center}}
\newcommand\ba{\begin{array}}
\newcommand\ea{\end{array}}
\newtheorem{thm}{Theorem}[section]
\newtheorem{conj}[thm]{Conjecture}
\newtheorem{cor}[thm]{Corollary}
\newtheorem{lem}[thm]{Lemma}
\newtheorem{defi}[thm]{Definition}
\theoremstyle{definition}
\newtheorem{rek}[thm]{Remark}
\numberwithin{equation}{section}
\begin{document}

\title{The Zeckendorf Game}

\author{Paul Baird-Smith}
\email{\textcolor{blue}{\href{mailto:ppb366@cs.utexas.edu}{ppb366@cs.utexas.edu}}}
\address{Department of Computer Science, University of Texas at Austin, Austin, TX}

\author{Alyssa Epstein}
\email{\textcolor{blue}{\href{mailto:alye@stanford.edu}{alye@stanford.edu}}}
\address{Department of Law, Stanford University, Stanford, CA}

\author{Kristen Flint}
\email{\textcolor{blue}{\href{mailto:kflint1101@gmail.com}{kflint1101@gmail.com}}}
\address{Department of Mathematics, Carnegie Mellon University, Pittsburgh, PA 15213}


\author{Steven J. Miller}
\email{\textcolor{blue}{\href{mailto:sjm1@williams.edu}{sjm1@williams.edu}},  \textcolor{blue}{\href{Steven.Miller.MC.96@aya.yale.edu}{Steven.Miller.MC.96@aya.yale.edu}}}
\address{Department of Mathematics and Statistics, Williams College, Williamstown, MA 01267}



\date{\today}


\begin{abstract} Zeckendorf \cite{Ze} proved that every positive integer $n$ can be written uniquely as the sum of non-adjacent Fibonacci numbers. We use this to create a two-player game. Given a fixed integer $n$ and an initial decomposition of $n = n F_1$, the two players alternate by using moves related to the recurrence relation $F_{n+1} = F_n + F_{n-1}$, and whoever moves last wins. The game always terminates in the Zeckendorf decomposition, though depending on the choice of moves the length of the game and the winner can vary. We find upper and lower bounds on the number of moves possible. The upper bound is on the order of $n\log n$, and the lower bound is sharp at $n-Z(n)$ moves, where $Z(n)$ is the number of terms in the Zeckendorf decomposition of $n$. Notably, Player 2 has the winning strategy for all $n > 2$; interestingly, however, the proof is non-constructive.
\end{abstract}

\maketitle

\tableofcontents


\section{Introduction}\label{section}

\subsection{History}

The Fibonacci numbers are one the most interesting and famous sequences. They appear in many varied settings, from Pascal's triangle to mathematical biology. Among their fascinating properties, the Fibonacci numbers lend themselves to a beautiful theorem of Zeckendorf \cite{Ze}: each positive integer $n$ can be written uniquely as the sum of distinct, non-adjacent Fibonacci numbers. This is called the \textit{Zeckendorf decomposition} of $n$ and requires that we define the Fibonacci numbers by $F_1 = 1, F_2 = 2, F_3 = 3, F_4 = 5...$ instead of the usual $1, 1, 2, 3, 5...$ to create uniqueness. The Zeckendorf theorem has been generalized many times (see for example \cite{Ho, Ke, MW1, MW2}), allowing the game explored in this paper potentially to be played similarly on other recurrences. For details on these generalizations, as well as references to the literature on generalizations of Zeckendorf's theorem, see the companion paper \cite{BEFMfibq}.

\subsection{Main Results}

We introduce some notation. By $\{1^n\}$ or $\{{F_1}^n\}$ we mean n copies of $1$, the first Fibonacci number. If we have 3 copies of $F_1$, 2 copies of $F_2$, and 7 copies of $F_4$, we could write either $\{{F_1}^3 \wedge {F_2}^2 \wedge {F_4}^7 \}$ or $\{1^3 \wedge 2^2 \wedge 5^7\}$.

\begin{defi}[The Two Player Zeckendorf Game]\label{defi:zg}
At the beginning of the game, there is an unordered list of $n$ 1's. Let $F_1 = 1, F_2 = 2$, and $F_{i+1} = F_i + F_{i-1}$; therefore the initial list is $\{{F_1}^n\}$. On each turn, a player can do one of the following moves.
\begin{enumerate}
\item If the list contains two consecutive Fibonacci numbers, $F_{i-1}, F_i$, then a player can change these to $F_{i+1}$. We denote this move $\{F_{i-1} \wedge F_i \rightarrow F_{i+1}\}$.
\item If the list has two of the same Fibonacci number, $F_i, F_i$, then
\begin{enumerate}
\item if $i=1$, a player can change $F_1, F_1$ to $F_2$, denoted by $\{F_1 \wedge F_1 \rightarrow F_2\}$,
\item if $i=2$, a player can change $F_2, F_2$ to $F_1, F_3$, denoted by $\{F_2 \wedge F_2 \rightarrow F_1 \wedge F_3\}$, and
\item if $i \geq 3$, a player can change $F_i, F_i$ to $F_{i-2}, F_{i+1}$, denoted by $\{F_i \wedge F_i \rightarrow F_{i-2}\wedge F_{i+1} \}$.
\end{enumerate}
\end{enumerate}
The players alternative moving. The game ends when one player moves to create the Zeckendorf decomposition.
\end{defi}

The moves of the game are derived from the recurrence, either combining terms to make the next in the sequence or splitting terms with multiple copies. We first show the game is well-defined, and then provide bounds on its length.

\begin{thm}\label{thm:welldefined}
Every game terminates within a finite number of moves at the Zeckendorf decomposition.
\end{thm}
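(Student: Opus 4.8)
The plan is to separate the claim into two independent parts: (i) the game cannot continue indefinitely, and (ii) a configuration admits no legal move precisely when it is the Zeckendorf decomposition. Once both are established, the theorem follows: any maximal play must halt, and it can only halt at a Zeckendorf decomposition.

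First I would record the elementary but crucial invariant that every move preserves the total value of the list. Each of the four moves is an instance of a Fibonacci identity, namely $F_{i-1}+F_i = F_{i+1}$, $F_1 + F_1 = F_2$, $2F_2 = F_1 + F_3$, and $2F_i = F_{i-2} + F_{i+1}$ for $i \ge 3$, so the sum of the entries always equals $n$. This bounds every entry by $n$ and will be needed to pin down the final state. For termination, the key step is to exhibit a monovariant. The number of summands is non-increasing but is left unchanged by the splitting moves (2b) and (2c), so it does not suffice on its own. Instead I would track the sum of squares $\Phi = \sum_j F_{i_j}^2$ over the current list $\{F_{i_1}, F_{i_2}, \dots\}$ and show that every move strictly increases $\Phi$. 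Writing $F_{i+1} = F_i + F_{i-1}$ and $F_{i-2} = F_i - F_{i-1}$, a direct computation gives the gains $2F_{i-1}F_i$ for move (1), $+2$ for each of moves (2a) and (2b), and $2F_{i-1}^2$ for move (2c); all are strictly positive. Since $\Phi$ is a positive integer bounded above by $\left(\sum_j F_{i_j}\right)^2 = n^2$, only finitely many moves are possible from any starting list, so every game terminates.

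Finally, I would identify the terminal states. Move (1) is available exactly when two consecutive Fibonacci numbers appear, and move (2) is available exactly when some Fibonacci number is repeated; hence a list admits no move if and only if its entries are distinct and no two are consecutive. This is precisely the defining property of a Zeckendorf decomposition, and since the total value is fixed at $n$, Zeckendorf's uniqueness theorem forces this terminal list to be the Zeckendorf decomposition of $n$. Combining the two parts proves the theorem.

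I expect the main obstacle to be the discovery and verification of the monovariant $\Phi$. The subtlety is that both the total value and, on the splitting moves, the term count are constant, so one must find a quantity sensitive to the redistribution of mass toward larger Fibonacci numbers, and then check strict monotonicity uniformly across all four move types, including the special small-index cases (2a) and (2b) where the generic formula $2F_{i-1}^2$ does not directly apply.
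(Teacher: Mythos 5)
Your proposal is correct: all four increment computations check out ($2F_{i-1}F_i$ for the merge, $+2$ for each of the two small-index moves, and $2F_{i-1}^2$ for the generic split, using $F_{i+1}=F_i+F_{i-1}$ and $F_{i-2}=F_i-F_{i-1}$), so $\Phi$ strictly increases, and combined with conservation of the total value and the observation that a move-free list is exactly a list of distinct, non-adjacent Fibonacci numbers, Zeckendorf uniqueness finishes the argument. The paper follows the same two-part architecture--a strict monovariant plus the characterization of stuck lists--but its monovariant (Lemma \ref{lem:fibmon}) is different: it tracks the sum of the square roots of the \emph{indices}, which strictly \emph{decreases} with every move. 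Because that quantity is real-valued, the paper does not conclude termination from monotonicity alone; it argues that no configuration can ever repeat and then invokes the finiteness of the set of partitions of $n$. Your integer-valued $\Phi$, increasing by at least $2$ per move and bounded above by $n^2$, yields termination directly together with an explicit (though crude) bound of at most $(n^2-n)/2$ moves, with no auxiliary counting of states. What the paper's choice buys is sharper quantitative control downstream: the square-root-of-indices monovariant is reused in the proof of Theorem \ref{thm:gamelengths} to obtain the $O(n\log n)$ upper bound on game length, which a quadratic bound cannot match. What your choice buys is self-containedness and simpler arithmetic, since every step is an elementary Fibonacci identity and strict monotonicity is uniform across all four move types without any concavity estimates.
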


Now that we know that the Zeckendorf game is playable, we might wonder how long it will take to play.

\begin{thm}\label{thm:gamelengths}
The shortest game, achieved by a greedy algorithm, arrives at the Zeckendorf decomposition in $n - Z(n)$ moves, where $Z(n)$ is the number of terms in the Zeckendorf decomposition of $n$. The longest game is bounded by $ i*n$, where $i$ is the index of the largest Fibonacci number less than or equal to $n$.
\end{thm}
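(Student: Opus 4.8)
The plan is to prove both bounds with monovariants built from the multiset of summands. Write $c_k$ for the number of copies of $F_k$ currently in the list, so that the number of summands is $S=\sum_k c_k$ and the total value $\sum_k c_k F_k = n$ is conserved by every move (immediate from the move definitions, each being an instance of $F_{i-1}+F_i=F_{i+1}$ or of $2F_i=F_{i+1}+F_{i-2}$ and its small-index analogues). The first observation I would record is that the moves fall into two classes: the \emph{combining} moves (1) and (2a) each decrease $S$ by exactly $1$, while the \emph{splitting} moves (2b) and (2c) leave $S$ unchanged. Since play starts at $S=n$ and, by Theorem~\ref{thm:welldefined}, ends at $S=Z(n)$, every game uses exactly $n-Z(n)$ combining moves together with some number of splitting moves. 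Hence the length equals $n-Z(n)+(\#\,\text{splitting moves})\ge n-Z(n)$, with equality precisely when no splitting move is ever played.

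For the shortest game it then remains to exhibit a play that uses only combining moves, realizing the greedy idea ``build the largest available Fibonacci number first.'' The key sub-step is a lemma: $\{1^{F_m}\}$ can be turned into the single term $\{F_m\}$ using only combining moves, proved by induction on $m$ via the split $\{1^{F_m}\}=\{1^{F_{m-1}}\}\cup\{1^{F_{m-2}}\}$, building $F_{m-1}$ and $F_{m-2}$ separately and then applying move (1). For general $n$, write $n=F_m+r$ with $F_m$ the largest Fibonacci $\le n$, so that $r<F_{m-1}$; build $F_m$ from $F_m$ of the ones, then recurse on the remaining $\{1^r\}$. Because $r<F_{m-1}$ forces every term of the Zeckendorf decomposition of $r$ to have index $\le m-2$, the term $F_m$ is non-adjacent to all of them, so the result is exactly the Zeckendorf decomposition of $n$, and a bookkeeping check gives the count $(F_m-1)+(r-Z(r))=n-Z(n)$. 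I expect the main subtlety here to be that a naive ``combine whenever you can'' greedy can stall in a position such as $\{2\wedge 2\wedge 2\}$, where only a splitting move is legal; the construction must therefore commit to an order (largest first) that provably never reaches such a dead end.

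For the longest game I would introduce the monovariant
\[
 \Phi \;=\; \sum_k c_k\, k\,F_k ,
\]
so that each copy of $F_k$ contributes $kF_k$. Since the value $\sum_k c_k F_k=n$ is conserved and every term has value at most $n<F_{i+1}$, every index appearing in any reachable list is at most $i$; therefore $\Phi \le i\sum_k c_k F_k = i\,n$ at all times. The heart of the argument is the claim that every single move increases $\Phi$ by at least $1$. Writing $g(k)=kF_k$, this reduces to four inequalities, each verified with $F_{i+1}=F_i+F_{i-1}$: for move (1), $g(i+1)-g(i)-g(i-1)=F_i+2F_{i-1}$; for move (2a), $g(2)-2g(1)=2$; for move (2b), $g(1)+g(3)-2g(2)=2$; and for move (2c), $g(i-2)+g(i+1)-2g(i)=2F_{i-1}-F_{i-2}\ge 1$. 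Each quantity is a positive integer, so $\Phi$ strictly increases at every move, whence the number of moves is at most $\Phi_{\text{final}}-\Phi_{\text{initial}}\le i\,n$.

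The genuinely hard part is locating the weight $g(k)=kF_k$. The natural candidates (the summand count $S$, the index sum $\sum_k k\,c_k$, or sums of squares of indices) are all neutral on the value-, count-, and index-sum-preserving move (2b), so none of them can bound the number of splitting moves on its own; one must find a weighting that detects (2b) while still increasing under the combining moves, which pull in the opposite direction. Once the correct potential is guessed the verification above is routine, and the two monovariants $S$ and $\Phi$ together pin down both ends of the range of game lengths.
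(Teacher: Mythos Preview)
Your argument is correct, and for both bounds it differs substantively from the paper's.

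For the lower bound, the paper fixes a particular deterministic rule (always act on the largest summand), sets $t(n)=Z(n)+m(n)$, and proves $t(n)=n$ by a recursion in $n$; optimality is argued separately by noting that no move removes more than one term. Your approach is more structural: the observation that combining moves decrease $S$ by exactly one while splitting moves fix $S$ gives the lower bound and the equality condition in one stroke, and your inductive construction of $\{F_m\}$ from $\{1^{F_m}\}$ supplies the witnessing play. Your remark that a careless combining-first rule can land in configurations like $\{2\wedge 2\wedge 2\}$ with no combining move available is a genuine subtlety, and your largest-first order cleanly avoids it.

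For the upper bound the paper reuses the decreasing monovariant $\sum_k c_k\sqrt{k}$ from Lemma~\ref{lem:fibmon}, argues via concavity that every move decreases it by at least $2\sqrt{i}-\sqrt{i-2}-\sqrt{i+1}>1/i$, and concludes from the initial value $n$ that at most $in$ moves occur. Your potential $\Phi=\sum_k c_k\,kF_k$ is a different and rather nicer choice: it is integer-valued, the four increments $F_j+2F_{j-1}$, $2$, $2$, $2F_{j-1}-F_{j-2}$ are checked by pure recurrence algebra with no analytic estimates, and the ceiling $\Phi\le i\cdot n$ drops out of value conservation together with the index bound. In fact your bookkeeping gives a hair more, since $\Phi_{\mathrm{initial}}=n$ yields at most $(i-1)n$ moves. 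Both routes prove the theorem; yours trades the paper's single monovariant for two tailored ones, gaining transparency at each end.
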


The theoretical upper bound presented here grows on a log-linear scale because the index of the largest Fibonacci number less than or equal to $n$ is less than $\log_{\phi}(\sqrt{5}F_i + 1/2)$, where $\phi$ is the golden ratio. This relation comes from Binet's formula. Since there is a wide span between the lower bound and the theoretical bound, we simulated random games and were led to the following conjectures.

\begin{conj}\label{conj:gaussian}
As $n$ goes to infinity, the number of moves in a random game decomposing $n$ into it's Zeckendorf expansion, when all legal moves are equally likely, converges to a Gaussian.
\end{conj}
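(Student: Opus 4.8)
The plan is to first strip out the deterministic part of the game length and then attack the surviving randomness with a martingale central limit theorem. The key observation is that the total number of terms in the list evolves in a completely predictable way. If the state has $a_i$ copies of $F_i$, set $T = \sum_i a_i$. Checking the four moves in Definition \ref{defi:zg}, moves (1) and (2a) each decrease $T$ by exactly one (two terms become one), while the splitting moves (2b) and (2c) leave $T$ unchanged (two terms become two). Since $T$ starts at $n$ and ends at $Z(n)$, every game uses exactly $n - Z(n)$ combining moves, regardless of play. Writing $L_n$ for the total number of moves and $S_n$ for the number of splitting moves, we therefore obtain the exact identity $L_n = (n - Z(n)) + S_n$. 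Because $n - Z(n)$ is deterministic, $L_n$ converges to a Gaussian (after centering and scaling) if and only if $S_n$ does, so it suffices to analyze the splitting moves.

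For the random game we view the reachable configurations as the states of a finite absorbing Markov chain, where from any non-terminal state $s$ one of the $d(s)$ legal moves is chosen uniformly and the Zeckendorf decomposition is the unique absorbing state (absorption in finitely many steps is guaranteed by Theorem \ref{thm:welldefined}). Let $R(s)$ denote the expected number of remaining moves from $s$, so that $R(s) = 1 + d(s)^{-1}\sum_{s'} R(s')$, the sum running over the legal successors of $s$. Along a random trajectory $s_0, s_1, s_2, \dots$ the quantity $M_k = R(s_k) + k$ is a martingale with respect to the natural filtration $\mathcal{F}_k$, with $M_0 = R(s_0) = \mathbb{E}[L_n]$ and stopped value $M_{L_n} = L_n$. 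Hence $L_n - \mathbb{E}[L_n] = \sum_{k\ge 1}(M_k - M_{k-1})$, where each increment $M_k - M_{k-1} = R(s_k) - R(s_{k-1}) + 1$ records how the move chosen at step $k$ perturbs the expected remaining length. The plan is to apply the martingale central limit theorem (in a form valid for the random horizon $L_n$) to this sum: one must show that the normalized conditional variance $\sigma_n^{-2}\sum_k \mathbb{E}[(M_k - M_{k-1})^2 \mid \mathcal{F}_{k-1}]$ converges in probability to a constant and that a Lindeberg condition on the increments holds, where $\sigma_n^2 = \mathrm{Var}(L_n)$.

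Two alternative routes may help supply the required estimates. First, one can try to decompose $S_n = \sum_i S_n^{(i)}$ by the index $i$ at which each split occurs and argue that splits at widely separated levels are only weakly dependent, so that a central limit theorem for sums with a sparse dependency graph (Stein's method) applies; this is analogous in spirit to the known proofs that the number of Zeckendorf summands is Gaussian. Second, the passage from $\{F_1^n\}$ to the Zeckendorf decomposition is exactly the process of adding $n$ ones and carrying in the Fibonacci numeration system, and the splitting moves correspond to a restricted class of carry events; a careful bookkeeping of carries could yield $\sigma_n^2$ in closed form together with its growth rate, which the conjecture itself leaves unspecified.

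The main obstacle is the conditional-variance step. Each martingale increment is controlled by the sensitivity $R(s_k) - R(s_{k-1})$ of the expected remaining game length to a single move, and bounding this sensitivity requires fine, uniform control of the function $R$ over the enormous reachable state space. In particular one must establish nondegeneracy, namely that $\sigma_n^2 \to \infty$ (otherwise no Gaussian limit can occur), and rule out the possibility that a small number of early, high-leverage moves dominates the fluctuations, which would violate the Lindeberg condition. I expect this to be the crux: it demands a structural understanding of how the branching of the game propagates, which is precisely the feature that leaves the deterministic bounds of Theorem \ref{thm:gamelengths} so far apart.
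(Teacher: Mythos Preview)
The paper does not prove this statement at all: it is stated as a conjecture and the only support offered is numerical (histograms of 9{,}999 random games at $n=60$ and $n=200$ with fitted Gaussian curves). There is no argument in the paper to compare your proposal against.

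Your proposal is a reasonable outline of how one might attack the conjecture, and the decomposition $L_n = (n - Z(n)) + S_n$ is correct and useful; checking the four move types does show that combining moves decrease the term count by exactly one while splits leave it fixed, so the entire fluctuation of $L_n$ is carried by the split count $S_n$. The martingale setup via the expected-remaining-length function $R$ is also sound in principle. However, as you yourself flag, none of the hypotheses of a martingale CLT are verified: you have not shown that $\sigma_n^2 \to \infty$, that the normalized conditional variance stabilizes, or that a Lindeberg condition holds. The alternative sketches (Stein's method over split levels, carry analysis in Fibonacci numeration) are suggestive but likewise not carried out. So what you have written is a plan, not a proof, and the gaps you name are genuine and substantial; in particular, controlling the sensitivity $R(s_k) - R(s_{k-1})$ uniformly over the reachable state space is exactly the kind of estimate for which no tools currently exist in this setting. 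The conjecture remains open.
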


\begin{conj}\label{conj:longgame}
The longest game on any $n$ is achieved by applying splitting moves whenever possible. Specifically, the longest possible game applies moves in the following order: merging ones, splitting from smallest to largest, and adding consecutives, from smallest to largest.
\end{conj}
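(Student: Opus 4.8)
The plan is to first reduce the conjecture to a statement about counting a single type of move. Every move replaces a sub-list by one of equal total Fibonacci value, so the sum of the entries is invariant; meanwhile the number of summands changes in a rigid way. The two combining moves---adding consecutives $\{F_{i-1}\wedge F_i\to F_{i+1}\}$ and merging ones $\{F_1\wedge F_1\to F_2\}$---each decrease the count by one, whereas the two splitting moves $\{F_2\wedge F_2\to F_1\wedge F_3\}$ and $\{F_i\wedge F_i\to F_{i-2}\wedge F_{i+1}\}$ for $i\ge 3$ leave the count unchanged. Since every game starts with $n$ summands and, by Theorem \ref{thm:welldefined}, ends with the $Z(n)$ summands of the Zeckendorf decomposition, exactly $n-Z(n)$ combining moves occur in any game, independent of the players' choices. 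Hence the length of a game equals $(n-Z(n))+S$, where $S$ is the number of splitting moves played, and maximizing the game length is exactly equivalent to maximizing $S$. This is consistent with Theorem \ref{thm:gamelengths}: the shortest game uses no splitting moves at all, while the variability in length is carried entirely by $S$.

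With the problem reduced to maximizing the number of splits, I would next argue that a split should never be postponed, i.e. that ``split whenever a repeated Fibonacci number $F_i$ with $i\ge 2$ is present'' is an optimal policy. The natural tool is a greedy-exchange argument: starting from a game of maximum length whose first move is not a split even though one is available, I would show that the split can be promoted to the front while never decreasing the total number of subsequent splits. The content of this step is that performing an available split does not destroy split opportunities faster than it creates them; because a split at index $i$ deposits one token at $i-2$ and one at $i+1$, it can only introduce new coincidences at the lower index, and that deposited low token feeds the smallest-index processing rather than foreclosing it.

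Granting that every split is eventually taken, the remaining task is to justify the specific order in the conjecture---merging ones, then splitting from smallest to largest index, then adding consecutives from smallest to largest---and to show this order creates the maximum number of repeated-pair opportunities. I would set this up as an induction on $n$, with the small cases verified directly to match the simulations, tracking the sorted multiset of indices present in the list as a lexicographic monovariant and showing that processing the smallest index first never eliminates a split that some other order would have enabled. The key structural lemma to isolate is that merging all available ones first maximizes the supply of $F_2$'s; each resulting pair of $F_2$'s can be split back into an $F_1$ and an $F_3$ via move 2b, recycling a low token and generating the largest possible cascade of further splits up the sequence.

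The step I expect to be the main obstacle is the exchange/monovariant argument in the middle. Splits are non-local: a single split moves mass simultaneously down to index $i-2$ and up to index $i+1$, so it can create a pairing at one index while destroying a potential pairing at another, and a naive ``moves commute'' argument therefore fails. Producing a genuine monovariant---most plausibly a carefully weighted or lexicographic function of the index multiset that is provably nondecreasing under the greedy move and strictly dominates under any alternative first move---is the crux of the problem, and is presumably the reason this statement is recorded as a conjecture rather than proved as a theorem.
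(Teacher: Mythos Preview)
The paper does not prove this statement at all: Conjecture \ref{conj:longgame} is recorded as an open conjecture and is supported only by numerical evidence, namely that the deterministic ``merge ones, then split small-to-large, then add consecutives small-to-large'' algorithm produces the same move count as a brute-force search over all games for small $n$. There is therefore no paper proof to compare against.

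Your reduction in the first paragraph is correct and is genuinely sharper than anything the paper states. Each combining move decreases the summand count by one and each splitting move preserves it, so every game contains exactly $n-Z(n)$ combining moves and has total length $(n-Z(n))+S$, with $S$ the number of splits. This cleanly explains why the lower bound $n-Z(n)$ in Theorem \ref{thm:gamelengths} is sharp (take $S=0$) and reduces the conjecture to a purely combinatorial question about maximizing $S$. The paper does not isolate this observation.

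Where your proposal is honest is exactly where it should be: the exchange argument in the second and third paragraphs is a sketch, not a proof, and you flag this yourself. The difficulty you name is real. A split at index $i$ deposits tokens at $i-2$ and $i+1$, so promoting a split to the front can both create and destroy later coincidences, and no simple commutation or local monovariant has been found that controls this. Your suggestion of a lexicographic invariant on the index multiset is a reasonable line of attack, but as written it is a plan rather than an argument; in particular, the claim that merging all ones first ``maximizes the supply of $F_2$'s'' and hence the downstream cascade would itself need a careful proof, since merging ones also consumes $F_1$'s that a later $\{F_2\wedge F_2\to F_1\wedge F_3\}$ would have produced. In short: your first paragraph is a real contribution beyond the paper, and the remainder correctly identifies the obstruction that leaves the statement a conjecture.
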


\begin{conj}\label{conj:avggame}
The average game is of a length linear with $n$.
\end{conj}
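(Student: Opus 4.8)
The plan is to prove the stronger statement that \emph{every} legal game ends in fewer than $3n$ moves. Since, as we will see, each game is forced to use at least $n-Z(n)$ moves (cf. Theorem~\ref{thm:gamelengths}), a two-sided linear bound follows, pinning the length of \emph{every} game --- and hence the average under any distribution on legal play --- at $\Theta(n)$. Thus the entire content is an $O(n)$ upper bound, and linearity from below is automatic.

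Represent a state as a multiset of Fibonacci indices. Letting $T$ be the number of tokens, each combining move (types (1) and (2a)) lowers $T$ by exactly $1$, while each splitting move (types (2b) and (2c)) preserves $T$. As the game runs from $T=n$ to the Zeckendorf decomposition with $T=Z(n)$, every game uses exactly $n-Z(n)$ combining moves; the length therefore equals $(n-Z(n))+S_{2b}+S_{2c}$, where $S_{2b}$ and $S_{2c}$ count the two splitting types. In particular the length is at least $n-Z(n)$, giving the lower bound, and it remains to bound $S_{2b}$ and $S_{2c}$ linearly.

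For $S_{2c}$ I would use the index-sum monovariant $\Phi=\sum_k i_k$. A quick check of the four moves shows $\Phi$ is non-increasing: a type-(1) move at level $i$ changes it by $2-i\le 0$, types (2a) and (2b) fix it, and a type-(2c) split sends $\{i,i\}$ to $\{i-2,i+1\}$ and so lowers $\Phi$ by exactly $1$. Since $\Phi$ begins at $n$ and stays positive, its total decrease is at most $n$; as each type-(2c) split accounts for one unit of that decrease while the combining moves only add more, we obtain $S_{2c}\le n$. For $S_{2b}$ I would run a conservation count on $F_2$-tokens. Each type-(2b) split consumes two $F_2$'s, so $2S_{2b}$ is at most the number of $F_2$'s ever consumed, which is at most the number ever created (the count of $F_2$'s on the board is never negative). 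The key bookkeeping claim is that, scanning the outputs of all four moves, an $F_2$ is produced only by a type-(2a) move and by a type-(2c) split at level $i=4$ (which emits $F_{i-2}=F_2$). There are at most $n-Z(n)<n$ moves of type (2a) and at most $S_{2c}\le n$ level-$4$ splits, so fewer than $2n$ many $F_2$'s are ever created, whence $S_{2b}<n$. Adding the three counts gives a length below $(n-Z(n))+n+n<3n$.

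The main obstacle is making the token accounting airtight: the whole argument rests on the exhaustive case analysis that $F_2$'s (respectively, the unit decrements of $\Phi$) come from exactly the moves claimed, and a single overlooked production channel would break the $F_2$ budget. I would also verify there is no circularity in feeding $S_{2c}\le n$ into the $F_2$ count --- there is none, since $\Phi$ bounds $S_{2c}$ independently of $S_{2b}$. Finally, this deterministic estimate proves considerably more than the conjecture requests; were one unable to control the worst case and wanted only the average, the natural fallback would be to treat $\Phi$ (or a weighted token count) as a supermartingale and bound the expected number of splits by optional stopping, but the monovariant route above makes this unnecessary.
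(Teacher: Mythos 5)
Your proposal is correct, and it necessarily takes a different route from the paper, because the paper gives no proof at all here: Conjecture \ref{conj:avggame} is supported only by simulation data (Figure \ref{fig:movesavg}, best-fit slope roughly $1.2$). I checked your accounting and it holds up. Combining moves (types (1) and (2a)) each reduce the token count by one and splits preserve it, so every game contains exactly $n-Z(n)$ combining moves. The index sum $\Phi$ starts at $n$ and never increases: a type-(1) move changes it by $2-i\le 0$ (legal only for $i\ge 2$), types (2a) and (2b) leave it fixed, and each type-(2c) split lowers it by exactly $1$, so $S_{2c}\le n-1$. Your $F_2$ bookkeeping is exhaustive: type (1) outputs $F_{i+1}$ with $i\ge 2$, type (2b) outputs $F_1$ and $F_3$, so the only sources of $F_2$'s are type-(2a) moves (at most $n-Z(n)$ of them, since they are a subset of the combining moves) and type-(2c) splits at level $4$ (at most $S_{2c}$ of them); since the number of $F_2$'s on the board starts at $0$ and can never go negative, $2S_{2b}\le (n-Z(n))+S_{2c}$, whence $S_{2b}<n$. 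So every legal game has length in $[\,n-Z(n),\,3n)$, and since $Z(n)=O(\log n)$, every game --- and hence the average under any distribution on legal play, including the uniform-random model of Conjecture \ref{conj:gaussian} --- has length $\Theta(n)$. This proves strictly more than was asked: it improves the worst-case upper bound of Theorem \ref{thm:gamelengths} from order $n\log_{\phi} n$ to $O(n)$, and it establishes the ``linear worst case'' that Conjecture \ref{conj:longgame} implicitly presumes. The one caveat is interpretive: your bound shows the average is $\Theta(n)$, but not that (average length)$/n$ converges to a constant, which (given the measured slope $\approx 1.2$) is plausibly the sharper statement the authors intended; as literally stated, though, the conjecture is fairly read as $\Theta(n)$, and your argument delivers that.
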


Of course, we are interested not just in how long the game takes, but who wins.

\begin{thm}\label{thm:playertwowins}
For all $n>2$, Player 2 has the winning strategy for the Zeckendorf Game.\footnote{If $n=2$, there is only one move, and then the game is over.}
\end{thm}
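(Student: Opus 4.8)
The plan is to combine Zermelo's theorem with a strategy-stealing argument, which is precisely what makes the result non-constructive. By Theorem~\ref{thm:welldefined} every game is finite, and since a game ends exactly when a player moves to the Zeckendorf decomposition, that player moves last and wins; there are no draws. As the game has perfect information, finite length, and no ties, Zermelo's theorem guarantees that exactly one of the two players has a winning strategy. Hence, to prove that Player~2 wins it suffices to rule out that Player~1 has a winning strategy. I will never exhibit Player~2's strategy explicitly — I only show that Player~1 cannot force a win — and this is the source of the non-constructivity advertised in the abstract.

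First I would record that the opening move is forced. When $n>2$ the list $\{F_1^n\}$ consists of at least three copies of $F_1$ and nothing else, so the only applicable move is $\{F_1 \wedge F_1 \rightarrow F_2\}$, taking the game to $\{F_1^{n-2} \wedge F_2\}$. Consequently the statement ``Player~1 has a winning strategy'' is equivalent to the assertion that $\{F_1^{n-2} \wedge F_2\}$, with Player~2 to move, is a loss for the player who moves there. The entire problem therefore reduces to analyzing this single position, and the plan is to assume it is a loss for the mover and derive a contradiction.

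For the strategy-stealing step I would examine Player~2's options at $\{F_1^{n-2} \wedge F_2\}$. For $n\ge 4$ there are two: the move $\{F_1 \wedge F_2 \rightarrow F_3\}$ to $\{F_1^{n-3} \wedge F_3\}$, and combining two $F_1$'s to $\{F_1^{n-4} \wedge F_2^2\}$. To show the opening is a win for Player~2 it is enough to show that at least one of these replies is itself a losing position for the player who then moves, i.e.\ a position from which Player~1 cannot win. The structural fact I would exploit is that the two replies are linked: the splitting move $\{F_2 \wedge F_2 \rightarrow F_1 \wedge F_3\}$ sends $\{F_1^{n-4} \wedge F_2^2\}$ directly to $\{F_1^{n-3} \wedge F_3\}$, so Player~2 can arrive at $\{F_1^{n-3} \wedge F_3\}$ after either one move or two. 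The intended contradiction runs: if instead both replies were wins for the mover (so that Player~1 wins the opening), this one-move/two-move ambiguity lets Player~2 hand Player~1's own winning line back with the mover and responder interchanged, forcing the declared winner to lose and violating the determinacy established above. The base cases $n=3$ and $n=4$, where some of these moves are unavailable, I would check directly; for $n=3$ the forced sequence $\{1^3\}\to\{1\wedge 2\}\to\{3\}$ already leaves Player~2 with the last move.

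The main obstacle is upgrading the tempo observation from these first two positions to the whole game. A winning strategy for Player~1 is a function defined on the entire game tree, so to derive a contradiction Player~2 must be able to shadow it — responding to each Player~1 move so as to preserve the swapped-tempo correspondence down to a terminal position — and not merely compare the immediate children of the opening. I expect this to demand a careful, inherently case-based analysis (which is what renders the proof non-constructive) showing that the two opening replies cannot both be wins for the mover, with the splitting moves, which alter the number of summands without changing the index pattern, being the delicate case to track. A separate, elementary treatment of the boundary positions in which the ``combine two $F_1$'s'' option is unavailable would round out the argument.
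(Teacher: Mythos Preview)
Your framing---Zermelo plus a tempo/strategy-stealing argument exploiting that the same position can be reached at two different depths---matches the paper's approach exactly, and your reduction to the forced position $\{1^{n-2}\wedge 2\}$ is correct. But the specific link you propose, $\{1^{n-4}\wedge 2^2\}\to\{1^{n-3}\wedge 3\}$, does not yield a contradiction at that depth: it only shows that \emph{one} of Player~1's three options from $\{1^{n-4}\wedge 2^2\}$ is bad, while the other two (in particular $\{1^{n-5}\wedge 2\wedge 3\}$, which is also the unique child of $\{1^{n-3}\wedge 3\}$ and hence Player~1-winning under the hypothesis) remain available. You flag this obstacle yourself, but the proposal does not say how to overcome it.

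The paper resolves it by descending two more levels. From the row-4 node $\{1^{n-5}\wedge 2\wedge 3\}$ (Player~2 to move), all three row-5 children must be Player~1 wins; one of them is $\{1^{n-5}\wedge 5\}$, so the mover wins there. But $\{1^{n-5}\wedge 5\}$ also lies at row~6 (reached via the $\{1^{n-4}\wedge 2^2\}$ branch), where the mover is Player~2---this is the actual parity steal. Its unique child $\{1^{n-7}\wedge 2\wedge 5\}$ is then Player~2-winning at row~7, hence so is any row-6 parent of it, in particular $\{1^{n-8}\wedge 2\wedge 3^2\}$. Now the row-5 node $\{1^{n-6}\wedge 3^2\}$ has exactly two children, $\{1^{n-5}\wedge 5\}$ and $\{1^{n-8}\wedge 2\wedge 3^2\}$, both just shown to be Player~2-winning at row~6; so $\{1^{n-6}\wedge 3^2\}$ is a Player~2 win at row~5, contradicting the requirement that every row-5 child of $\{1^{n-5}\wedge 2\wedge 3\}$ be a Player~1 win. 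The cases $3\le n\le 8$ are handled by direct computation. In short, your plan is right, but the contradiction lives two layers deeper than where you stopped, and it is closed not by a single steal but by back-propagating through the two-child node $\{1^{n-6}\wedge 3^2\}$.
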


Since someone must always make the final move, and the game always terminates, for each $n$ one of the two players must have a winning strategy. In other words, someone must always be able to force their victory. This theorem shows that for all nontrivial games, Player 2 has this strategy. The proof is not constructive: it merely shows the existence of Player 2's winning strategy; we cannot identify how they should move. Though we can give exact winning strategies for small $n$, we leave the general winning strategy for future research.


\section{The Zeckendorf Game}

\subsection{The Game is Playable}

In this section, we provide many proofs related to the Zeckendorf Game. We begin with the proof of Theorem \ref{thm:welldefined}, which shows that the game is well defined and playable, starting with an important lemma.

\begin{lem}[Fibonacci Monovariant]\label{lem:fibmon}
The sum of the square roots of the indices on any given turn is a monovariant.\footnote{For us, monovariant is a quantity which is either non-increasing or non-decreasing.}
\end{lem}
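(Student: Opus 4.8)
The plan is to show that the quantity $S = \sum \sqrt{i}$, summed over all indices $i$ appearing in the current list (with multiplicity), changes monotonically as the game is played. Since the game consists of exactly the moves catalogued in Definition \ref{defi:zg}, I would simply compute the change $\Delta S$ induced by each of the four move types and verify that they all have the same sign (or are zero). Concretely, I would tabulate:

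\begin{enumerate}
\item The combining move $\{F_{i-1}\wedge F_i \to F_{i+1}\}$ changes $S$ by $\sqrt{i+1} - \sqrt{i} - \sqrt{i-1}$.
\item The move $\{F_1 \wedge F_1 \to F_2\}$ changes $S$ by $\sqrt{2} - 2\sqrt{1} = \sqrt{2}-2$.
\item The move $\{F_2 \wedge F_2 \to F_1 \wedge F_3\}$ changes $S$ by $(\sqrt{1}+\sqrt{3}) - 2\sqrt{2}$.
\item The general splitting move $\{F_i \wedge F_i \to F_{i-2}\wedge F_{i+1}\}$ for $i\ge 3$ changes $S$ by $(\sqrt{i-2}+\sqrt{i+1}) - 2\sqrt{i}$.
\end{enumerate}

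I expect every one of these to be strictly negative, so that $S$ is a (strictly decreasing) monovariant; in particular, I would argue $S$ is non-increasing on every move. For the combining moves this is clear since $\sqrt{i+1} < \sqrt{i} + \sqrt{i-1}$ (indeed $\sqrt{i+1}<\sqrt{i}+1\le \sqrt{i}+\sqrt{i-1}$ once $i\ge 2$, and the base case $i=1$ is handled separately as move type 2). The numerical cases (2) and (3) are immediate. The crux is the general splitting bound in (4): I need $\sqrt{i-2}+\sqrt{i+1} < 2\sqrt{i}$ for all $i \ge 3$, which says the function $f(x)=\sqrt{x}$ lies strictly above the chord, i.e. is strictly concave. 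This follows because $\sqrt{x}$ has negative second derivative on $(0,\infty)$, so by strict concavity the value at the midpoint exceeds the average of the endpoint values; here the two points $i-2$ and $i+1$ are not symmetric about $i$, so I would instead write $\sqrt{i-2}+\sqrt{i+1} = (\sqrt{i-2}+\sqrt{i+2}) + (\sqrt{i+1}-\sqrt{i+2})$ and bound each piece, or more cleanly just invoke concavity of $\sqrt{\cdot}$ directly to get $\sqrt{i-2}+\sqrt{i+1} \le 2\sqrt{(i-2+i+1)/2} = 2\sqrt{i-\tfrac12} < 2\sqrt{i}$.

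The main obstacle, such as it is, is being careful with the concavity estimate in the splitting case and with the boundary index $i=1$, since $\sqrt{0}$ never appears and the two lowest moves behave differently from the generic splitting rule. Once all four cases are verified to decrease $S$, the lemma follows immediately: $S$ is strictly decreasing on every legal move, hence is a monovariant. (This monotone, strictly positive, strictly decreasing quantity will presumably be the engine for proving termination in Theorem \ref{thm:welldefined}, since an infinite play would force $S$ below any bound while it remains positive.)
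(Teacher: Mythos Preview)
Your proposal is correct and follows essentially the same approach as the paper: enumerate the four move types, compute the change in $S=\sum\sqrt{i}$ for each, and verify all are strictly negative using the concavity (and monotonicity) of $\sqrt{\cdot}$. Your explicit inequality $\sqrt{i-2}+\sqrt{i+1}\le 2\sqrt{(2i-1)/2}<2\sqrt{i}$ for the splitting case is in fact a bit more detailed than the paper's, which simply appeals to concavity without writing out the bound.
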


\begin{proof}
Our moves cause the following changes in the proposed monovariant. We observe that we only have to consider the affected terms because the suggested monovariant is a sum, so unaffected terms contribute the same before and after the move. Here, $k$ is the index of $F_k$, a term in the current decomposition.\\

\begin{itemize}
\item Adding consecutive terms: $-\sqrt{k-2}-\sqrt{k-1} + \sqrt{k}$\\
\item Splitting: $-2\sqrt{k} + \sqrt{k-2} +\sqrt{k+1}$\\
\item Adding 1's: $-2+\sqrt{2}$\\
\item Splitting 2's: $-2\sqrt{2} + 1 + \sqrt{3}$.\\
\end{itemize}

We note that for all positive $k>2$, in other words all indices not addressed in a special case above, all of these moves cause negative changes. We can see this by the fact that $\sqrt{x}$ is a monotonically increasing, concave function. So this is a monovariant; the sum of the square roots of the indices constantly decrease with each move, so it is strictly decreasing.
\end{proof}

With this lemma, we now prove Theorem \ref{thm:welldefined}.

\begin{proof}[Proof of Theorem \ref{thm:welldefined}]
At the beginning of the game, we have a sum of the square roots of the indices of our list of numbers equal to $\sqrt{n}$, where $n$ is the number we have chosen for the game. From the monovariant of Lemma \ref{lem:fibmon}, we know that the listed moves always decrease this sum. Therefore, no two moves can have the same monovariant value, and there will be no repeat turns. Since the game essentially moves among a subset of partitions of $n$, of which there are a finite number, this implies that the game must always end within a finite number of turns. Moreover, the game always ends at the Zeckendorf decomposition. If it terminated elsewhere, there would either be duplicate terms or the recurrence would apply, by definition. So, there would still be a valid move and the game would not have terminated. This concludes the proof.
\end{proof}

Now that we know for sure that we can play the Zeckendorf Game, we wonder how long the game will take. First, we address the question of whether the game must always take the same amount of turns. If it does, this game is definitely not fair because it predetermines a victor! Fortunately, this is not the case as long as we choose an $n$ greater than $3$.

\begin{lem}\label{lem:multseq}
Given any positive integer $n$ such that $n > 3$, there are at least two distinct sequences of moves $M = \{m_i\}$ where the application of each set of moves to the initial set, denoted $M(\{F_1\}_n)$, leads to $Z_n$, the Zeckendorf decomposition of $n$.
\end{lem}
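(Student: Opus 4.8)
The plan is to prove the existence of two distinct move-sequences constructively, by exhibiting a fork: a position reachable from the start from which at least two different legal moves are available, and from which the game can be completed to $Z_n$ along two genuinely different paths. Since Theorem \ref{thm:welldefined} guarantees that \emph{every} play terminates at the Zeckendorf decomposition, I do not need to engineer the endings carefully; I only need to force a genuine branch point and then let the two branches run to completion. The two resulting full sequences of moves will automatically differ (they disagree at the fork), and both will end at $Z_n$, which is exactly what the lemma asks for.

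First I would reduce to producing a single reachable position with two distinct available moves. Starting from $\{F_1^n\}$, the only initially available move is merging ones, $\{F_1 \wedge F_1 \rightarrow F_2\}$; so I would run a few merges to build up copies of $F_2$ and $F_3$. The key observation is that once the list contains, say, a pair of equal terms \emph{and} a consecutive pair simultaneously (or two independently actionable spots), the mover has a real choice: apply a splitting/combining move in one place versus another. The threshold $n > 3$ is exactly what is needed here: for $n = 4,5$ one checks by hand that such a branching position is reachable, and for larger $n$ the extra ones only make branching easier, since more copies of $F_1$ (hence more independent merges, or a merge plus an already-formed consecutive pair) are available. I would therefore split into a base case (explicitly listing two distinct move sequences for $n=4$) and then argue that any $n>4$ inherits branching by first merging down enough ones to reach a configuration containing a branch, or more cleanly by exhibiting a position with two simultaneously legal, non-interfering moves.

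Concretely, I expect the cleanest argument to be: show that from $\{F_1^n\}$ with $n\ge 4$ one can reach a position containing either two disjoint consecutive pairs or a consecutive pair together with a repeated term, and that these two moves commute in the sense that applying them in either order yields legal continuations to $Z_n$. Because the monovariant of Lemma \ref{lem:fibmon} strictly decreases, the two continuations cannot accidentally re-converge into the \emph{same} sequence of moves even if they pass through the same intermediate multisets; but to be safe I would simply ensure the very first move after the fork differs between the two sequences, which already makes $M$ and $M'$ distinct as sequences.

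The main obstacle I anticipate is the bookkeeping for small $n$ and making the ``two independent moves'' argument airtight: I must verify that a branching position is actually reachable for every $n>3$ (not just asymptotically) and that neither branch gets stuck before reaching $Z_n$. The second worry is handled for free by Theorem \ref{thm:welldefined}, which guarantees termination at $Z_n$ regardless of choices, so the real work is just exhibiting the fork. I would therefore present a short explicit play for the base case $n=4$ — for instance $\{1^4\}$ can be developed so that at some point both a merge of ones and a combine of a consecutive pair are legal — and then give a one-line reduction showing the extra ones present when $n>4$ preserve this branching.
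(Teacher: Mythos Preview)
Your proposal is correct and follows essentially the same route as the paper: the paper exhibits two explicit move sequences for $n=4$ (namely $\{1^4\}\to\{1^2,2\}\to\{2,2\}\to\{1,3\}$ versus $\{1^4\}\to\{1^2,2\}\to\{1,3\}$, which is precisely your fork after one merge of ones), and then observes that for $n>4$ one may run either of these on four of the ones and thereafter complete the game, with Theorem~\ref{thm:welldefined} guaranteeing termination at $Z_n$. Your discussion of commuting moves and the monovariant is unnecessary overhead---since the two sequences already differ at the second move, they are distinct as sequences regardless of what happens downstream---but the core argument is the same as the paper's.
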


\begin{proof}
If we show that there are two distinct sets of moves that arrive at the Zeckendorf decomposition of $4$, we have proved the claim because for all $n > 4$: we can follow the two different identified games up to $4$, both of which are valid paths to the Zeckendorf decomposition.

The following two sequences of moves result in the Zeckendorf composition of $4$:
\begin{align*}
M_1 &= \{\{F_1 \wedge F_1 \rightarrow F_2\},\{F_1 \wedge F_1 \rightarrow F_2\}, \{2F_2 \rightarrow F_1\wedge F_3\}\} \\
M_2 &= \{\{F_1 \wedge F_1 \rightarrow F_2\}, \{F_1 \wedge F_2 \rightarrow F_3\}\}
\end{align*}
Therefore, there are multiple games for any $n>3$.
\end{proof}

\begin{rek}\label{rek:nonmult}
If $n\leq 3$ there is one unique sequence of moves that arrives at the Zeckendorf decomposition. If $n = 1$, $M = \{\}$. If $n = 2, M = \{F_1 \wedge F_1 \rightarrow F_2\}$. If $n=3, M = \{\{F_1 \wedge F_1 \rightarrow F_2\},\{F_1 \wedge F_2 \rightarrow F_3\}\}$.
\end{rek}

\begin{cor}\label{cor:multleng}
For any positive $n > 3$, there are at least two games with different numbers of moves. Further, there is always a game with an odd number of moves and one with an even number of moves.
\end{cor}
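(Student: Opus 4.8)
The plan is to leverage the two explicit games for $n=4$ already exhibited in the proof of Lemma~\ref{lem:multseq} and to observe that their lengths differ by exactly one. The sequence $M_1$ consists of three moves while $M_2$ consists of two moves; since $3$ and $2$ have opposite parities, the case $n=4$ settles both assertions simultaneously: there are two games of different length, and one has an odd number of moves while the other has an even number. The entire burden is therefore to propagate this single-move length gap to every $n>4$.

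For $n>4$ I would argue as follows. Starting from $\{F_1^n\}$, set aside $n-4$ of the ones and apply the moves of $M_i$ (for $i=1,2$) to the remaining four tokens only. Because each move of the game is local, consuming one or two specified terms and being unaffected by the presence of other untouched terms, every move of $M_i$ remains legal in the larger list. Tracking the tokens shows that $M_1$ carries the four designated ones through $\{F_1^2 \wedge F_2\}$ and $\{F_2^2\}$ to $\{F_1 \wedge F_3\}$, while $M_2$ carries them through $\{F_1 \wedge F_2\}$ to the same $\{F_1 \wedge F_3\}$; rejoining the $n-4$ spectator ones, both prefixes terminate in the identical configuration $\{F_1^{n-3}\wedge F_3\}$.

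By Theorem~\ref{thm:welldefined} there is at least one legal continuation $C$ from this common configuration to the Zeckendorf decomposition $Z_n$; fix one such $C$ and set $k=|C|$. Appending $C$ to each of $M_1$ and $M_2$ yields two complete games from $\{F_1^n\}$ to $Z_n$, of lengths $3+k$ and $2+k$ respectively. These lengths are distinct, which proves the first claim, and they differ by exactly $1$, so precisely one is even and the other is odd, which proves the second claim.

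The step requiring the most care is the assertion that the two prefixes $M_1$ and $M_2$, applied to a designated four-token sublist, both reconverge to the \emph{same} state $\{F_1^{n-3}\wedge F_3\}$ independently of the remaining tokens; this is exactly what guarantees that the shared tail $C$ has a well-defined common length $k$, and hence that the two full games differ by precisely one move rather than by some uncontrolled amount. Checking that each individual move of $M_1$ and $M_2$ stays legal in the presence of the extra ones is routine once one observes that a move is triggered by matching a pattern among the terms (two equal terms, or two terms of consecutive index) and that spectator terms never obstruct such a match. I do not expect any genuine difficulty beyond this bookkeeping.
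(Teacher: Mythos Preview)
Your proposal is correct and follows essentially the same approach as the paper: run the two explicit length-$3$ and length-$2$ games for $n=4$ on four of the ones while the remaining $n-4$ ones sit idle, observe that both prefixes land in the common state $\{F_1^{n-3}\wedge F_3\}$, and then append a single shared continuation of length $k$ to obtain games of lengths $3+k$ and $2+k$. If anything, you are slightly more explicit than the paper about why the spectator tokens do not obstruct the moves and why the two prefixes reconverge to the same configuration.
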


\begin{proof}
In Lemma \ref{lem:multseq}, we showed that two different sets of moves $M_1$ and $M_2$ arrive at the Zeckendorf Decomposition of $4$. Notice that $|M_1| = 3$ but $|M_2| = 2$. As there are no losing games, for any $n >4$, we can follow either of these games up to the Zeckendorf decomposition of $4$. Regardless of the number or sequence of moves it takes to resolve the rest of the game (call the sequence $M_k$, with $|M_k|=k$), we have already identified two sets of moves with different orders, $M_1 \wedge M_k$ and $M_2 \wedge M_k$, that describe a complete game. $|M_1 \wedge M_k| = 3+k$, but $|M_2 \wedge M_k| = 2+k$. If $k$ is even, $3+k$ is odd and $2+k$ is even. If $k$ is odd, $3+k$ is even and $2+k$ is even. This proves the claim.
\end{proof}

\subsection{Bounds on the Lengths of Games}

We have now established that this game has variation in both game length and parity. It is natural to ask how much variety there is between short, long, and average games. To this end, we provide a proof of Theorem \ref{thm:gamelengths}. To do so, we first include a lemma about the structure of a game following a greedy algorithm.

\begin{lem}\label{lem:zeckmoves}
Let $m(n)$ be the number of moves in a deterministic game where the players must always move on the largest valued number. Let $Z(n)$ be the number of terms in the Zeckendorf decomposition of $n$. Then $m(n) = n - Z(n)$.
\end{lem}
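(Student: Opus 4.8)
The plan is to track a single integer statistic through the game and read off the answer from how each move changes it. Let $T$ denote the total number of terms in the current list, counted with multiplicity, so that $T = n$ at the start and $T = Z(n)$ when the game ends (Theorem~\ref{thm:welldefined} guarantees every game terminates at $Z_n$). Inspecting the four move types shows exactly how $T$ changes: the two \emph{combining} moves, $\{F_{i-1} \wedge F_i \to F_{i+1}\}$ and $\{F_1 \wedge F_1 \to F_2\}$, each replace two terms by one and so decrease $T$ by $1$, while the two \emph{splitting} moves, $\{F_2 \wedge F_2 \to F_1 \wedge F_3\}$ and $\{F_i \wedge F_i \to F_{i-2} \wedge F_{i+1}\}$, each replace two terms by two and leave $T$ unchanged. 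Hence in any game whatsoever the number of combining moves is forced to be exactly $n - Z(n)$, and the total number of moves equals $n - Z(n)$ plus the number of splitting moves used. In particular $m(n) = n - Z(n)$ if and only if the greedy (largest-value) game performs no splitting move at all.

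So the real content is to show the greedy game never splits. I would prove this via the invariant that, at every stage of the greedy game, each Fibonacci value $F_k$ with $k \ge 2$ occurs at most once, and only $F_1$ is allowed to repeat. Since every splitting move requires two copies of some $F_k$ with $k \ge 2$ (and the move on $F_2 \wedge F_2$ likewise needs two $F_2$'s), this invariant immediately rules out all splitting moves, giving $m(n) = n - Z(n)$.

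The invariant is proved by induction on the number of moves. It holds at the start, when the list is $\{F_1^n\}$ and no value $F_k$ with $k \ge 2$ is present. For the inductive step, suppose the invariant holds and the greedy rule selects a move on the largest value $F_k$ that admits one; by choice of $F_k$, every value $F_m > F_k$ currently present has $F_{m-1}$ absent, since otherwise $F_m$ would itself admit a combining move, contradicting maximality. The available move on $F_k$ must be a combining move $\{F_{k-1} \wedge F_k \to F_{k+1}\}$ (or $\{F_1 \wedge F_1 \to F_2\}$ when $k = 1$), since a splitting move would require a second copy of $F_k$, which the invariant forbids for $k \ge 2$. This move creates a single new term $F_{k+1}$ (respectively $F_2$), and maximality of $F_k$ forces that term to have been absent beforehand: were $F_{k+1}$ present, then together with the $F_k$ being consumed it would admit its own combining move, again contradicting that $F_k$ is the largest value with a move. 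Thus no value $\ge F_2$ is ever duplicated, completing the induction.

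The main obstacle is this inductive step: one must verify that the greedy choice, always resolving the largest movable value first, is exactly what prevents a freshly created term from colliding with an existing equal term. The bookkeeping of $T$ in the first paragraph is routine, but the argument that greedy never duplicates a value $\ge F_2$, and hence never needs to split, is where the structure of the game is genuinely used.
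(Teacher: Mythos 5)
Your proof is correct, and it takes a genuinely different route from the paper's. The paper inducts on $n$: it claims that the greedy game on $n+1$ ones eventually reaches the configuration consisting of the Zeckendorf decomposition of $n$ together with one leftover $1$, writes $m(n+1) = m(n) + k(n+1)$ and $Z(n+1) = Z(n) + 1 - k(n+1)$ for the number $k(n+1)$ of extra moves, and then observes that $t(n) := Z(n) + m(n)$ satisfies $t(n+1) = t(n) + 1$ with $t(1) = 1$, forcing $m(n) = n - Z(n)$. You instead work move-by-move inside a single game: merging moves drop the term count by one, splitting moves preserve it, so \emph{any} game lasts exactly $n - Z(n)$ plus its number of splits, and your no-duplication invariant then shows the greedy game never splits. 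Your route buys two things. First, the intermediate statement that every game, greedy or not, takes at least $n - Z(n)$ moves with excess equal to the number of splits --- a fact the paper must re-derive separately inside the proof of Theorem \ref{thm:gamelengths}. Second, you avoid the paper's structural claim that the greedy game on $n+1$ ones passes through the Zeckendorf decomposition of $n$ plus a spare $1$; that claim is asserted with essentially no justification and is the weakest step of the paper's argument, whereas your invariant does the comparable work and is proved carefully by induction on moves. What the paper's route buys in exchange is the relation $Z(n+1) = Z(n) + 1 - k(n+1)$, describing how Zeckendorf decompositions of consecutive integers are related, which has some independent interest. One convention your inductive step silently relies on and should state explicitly: a merge $\{F_k \wedge F_{k+1} \rightarrow F_{k+2}\}$ counts as a move on $F_{k+1}$, its largest consumed summand; this matches the paper's reading (``acts on the largest valued summand with an available move''), and your contradiction --- that a pre-existing $F_{k+1}$ would yield an available move on a value larger than $F_k$ --- needs exactly this convention.
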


\begin{proof}
Each player acts on the largest valued summand with an available move. The game on $n$ takes $m(n)$ moves. Looking at the game on $n+1$, we observe that the list of summands will eventually reach $ \{1, a,b,c,\dots \} $ where $ \{a,b,c,\dots \} $ is the Zeckendorf decomposition of $n$. Thus $m(n+1) = m(n) + k(n+1)$, where $k$ is a function that is always non-negative.



If the smallest summand in the Zeckendorf decomposition for $n$ is greater than or equal to 3, there are no additional moves that can be made and $k(n+1) = 0$.  However, if the smallest summand is 1 or 2, the smallest summand can be combined with the additional 1.  Because an additional move was completed, $k(n+1)\geq 1$.  It then may be possible to now make another move with the decomposition that was just created.  For every additional move that can be made, $k(n+1)$ increases by 1. We also know that for each additional move, the number of terms in the Zeckendorf decomposition decreases by 1, because each move combines two numbers into one.  We have
\begin{eqnarray}\label{eqn:znplus1}
Z(n+1) & \ = \ & Z(n) + 1 - k(n+1) \nonumber\\
m(n+1) & \ = & \ m(n) + k(n+1).
\end{eqnarray}

Define $t(n)$ by
\begin{equation}
t(n) \ := \ Z(n) + m(n). \\
\end{equation}
By adding the equations given by \eqref{eqn:znplus1} we see that $t(n) $ satisfies a simple recurrence:
\begin{eqnarray}
Z(n+1) + m(n+1) & = & Z(n) +m(n) + 1 \nonumber \\
t(n+1) & = & t(n) + 1  \nonumber \\
& = & t(n-1) + 2 \nonumber \\
& = & t(n-2) + 3 \nonumber \\
& \vdots & \nonumber \\
& = & t(1) + n.
\end{eqnarray}

Since 1 is a Fibonacci number, the Zeckendorf decomposition of 1 is just 1, and we have  $ Z(1) \ = \ 1$ and $ m(1) \ = \ 0 $.  Thus
\begin{eqnarray}
t(n+1) & = & t(1) + n \nonumber \\
& = & Z(1) + m(1) +  n \nonumber \\
& = & 1 + 0 + n \nonumber \\
t(n+1) & = & n+1.
\end{eqnarray}

From this, we see that for any positive integer $n$, $ t(n) \ = \ n $ and so, with the definition of $ t(n)$, we have that
\begin{eqnarray}
t(n) & = & Z(n) + m(n) \nonumber\\
n & = & Z(n) + m(n) \nonumber \\
 m(n) & = & n - Z(n).
\end{eqnarray}

We have shown that for any positive integer $n$, when starting from a list of length $n$ that contains all 1's,  the number of moves it takes to reach the Zeckendorf decomposition for $n$  will be equal to $n$ minus the number of terms in the final Zeckendorf decomposition for $n$. Thus, we have shown Lemma \ref{lem:zeckmoves}.
\end{proof}

\begin{proof}
A quick way to arrive at the Zeckendorf decomposition would be to decrease one term on every move. This would make a short game happen in $n-Z(n)$ moves. No game would be faster, because each possible move decreases the number of terms by at most one. That this game is achievable follows from Lemma \ref{lem:zeckmoves}. Since this number of moves is theoretically shortest and is actually possible, it is a sharp lower bound on the number of moves in the Zeckendorf game.

For the longest game, we return to the monovariant established in Lemma \ref{lem:fibmon}. We observe that the least each move can change the sum is by a splitting move way late into the game. Splitting moves cost at least $2\sqrt{\ell} - \sqrt{\ell-2} -\sqrt{\ell+1}$, where $\ell$ is the index of the largest Fibonacci number less than or equal to $n$. We notice that $2\sqrt{\ell} - \sqrt{\ell-2} -\sqrt{\ell+1} > \sqrt{\ell} - \sqrt{\ell-1}$ because square root is concave and increasing. Then, we observe that $1 = n - (n-1) = (\sqrt{n} - \sqrt{n-1})(\sqrt{n}+\sqrt{n-1})$, which implies that $\sqrt{n} - \sqrt{n-1} = \frac{1}{\sqrt{n}+\sqrt{n-1}} > \frac{1}{n}$. So, $2\sqrt{\ell} - \sqrt{\ell-2} -\sqrt{\ell+1} > 1/\ell$. This gives that it will take at most $\ell \cdot n$ moves to reach the Zeckendorf decomposition. Since $\ell$ is a Fibonacci index, we recall Binet's formula to get a bound in terms of $n$: $F_{\ell}=\frac{1}{\sqrt{5}}({\phi}^{\ell}-(-\phi)^{-\ell})$. We note that $|\frac{\phi^{-\ell}}{\sqrt{5}}|<\frac{1}{2}$, which implies that $\sqrt{5}F_{\ell} < \phi^{\ell} - 1/2$. Taking a base $\phi$ logarithm of both sides, we get $\log_{\phi}(\sqrt{5}F_{\ell} + 1/2) > \ell$. This shows that $\ell\cdot n < \log_{\phi}(\sqrt{5}n + 1/2)n$.
\end{proof}

\subsection{Conjectures on Game Lengths}

Using Mathematica code (see Appendix \ref{sec:code}), we support the conjectures on game length introduced in the introduction with simulation data. We address Conjecture \ref{conj:gaussian} first. Observing Figure \ref{fig: gaussian60}, the best fit Gaussian seems to align well with the distribution of moves taken over 9,999 simulations of the Zeckendorf Game with $n = 60$. Figure \ref{fig:gaussian200} shows the same experiment on $n=200$ with 9,999 simulations.

\begin{figure}[h]
\includegraphics[]{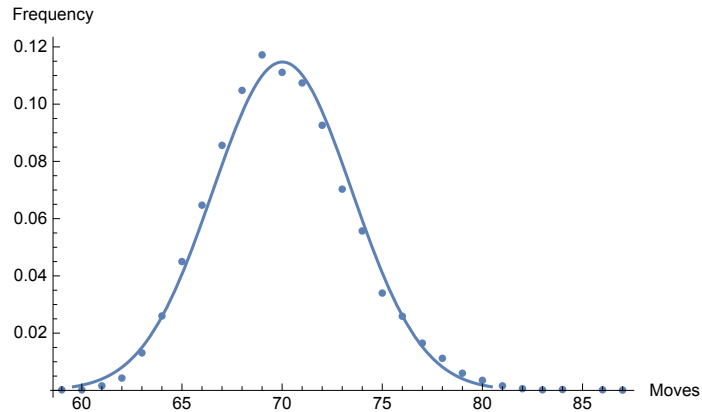}
\caption{Frequency graph of the number of moves in 9,999 simulations of the Zeckendorf Game with random moves when $n=60$ with the best fit Gaussian over the data points.}
\label{fig: gaussian60}
\end{figure}

\begin{figure}[h]
\includegraphics[]{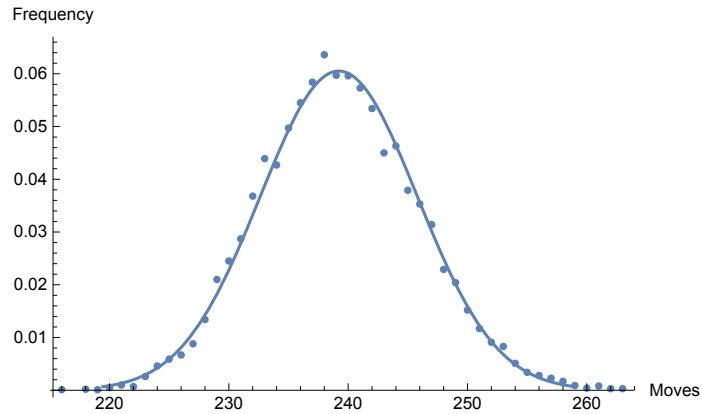}
\caption{Frequency graph of the number of moves in 9,999 simulations of the Zeckendorf Game with random moves when $n=200$ with the best fit Gaussian over the data points.}
\label{fig:gaussian200}
\end{figure}

To see how Conjecture \ref{conj:longgame} may be true, we provide two pieces of evidence. The first is the move count from simulation of the deterministic algorithm stated in the conjecture. Recall that the order of moves is adding ones, splitting from smallest to largest, then adding consecutives from smallest to largest. Figure \ref{fig:longgamedata} shows an array with the $x$ component being $n$ and the $y$ component being the number of moves in the hypothesized deterministic longest game algorithm. The second piece of evidence comes from a Java program, a link to and readme for which is included in Appendix \ref{sec:code}. The Java program explores all possible moves in the Zeckendorf game for a given $n$. The data produced here is the longest possible move length for the $n$ listed. Observe that the two arrays provide identical data. This suggests that the hypothesized longest game algorithm may actually be the theoretically longest game on each $n$.

\begin{figure}[h]
\includegraphics[scale=.7]{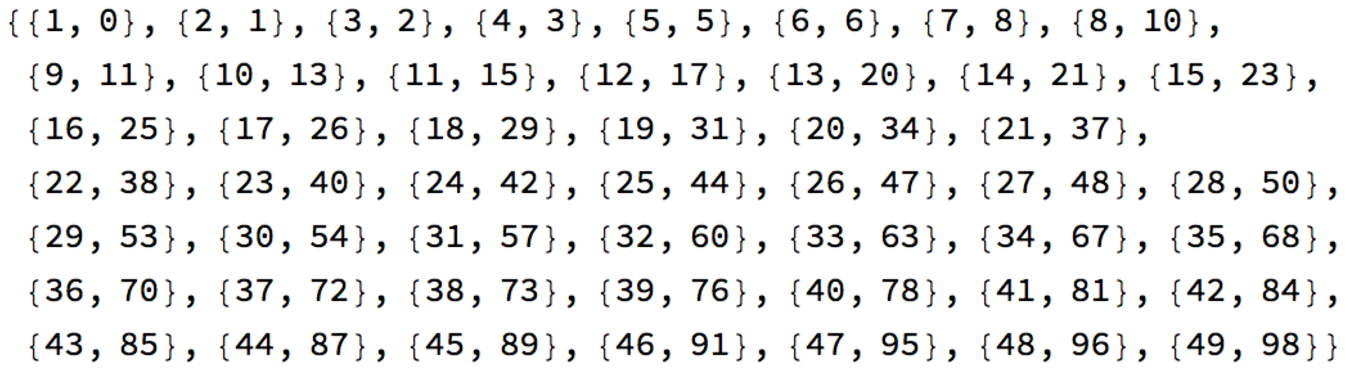}
\caption{Data taken from the simulation of the deterministic longest game proposed by the algorithm in Conjecture \ref{conj:longgame}.}
\label{fig:longgamedata}
\end{figure}

\begin{figure}[h]
\includegraphics[scale=.6]{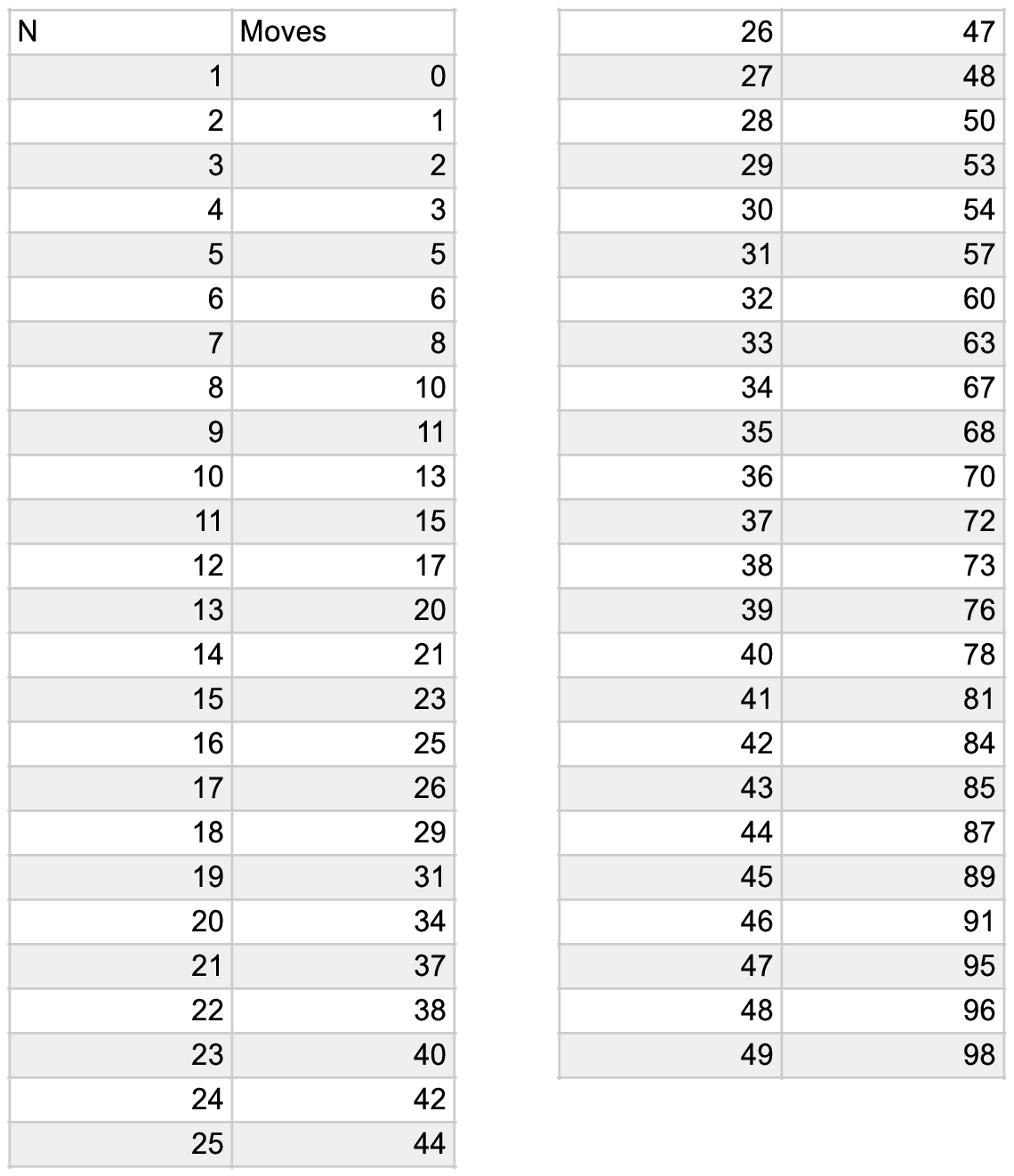}
\caption{Computer proven data of the number of moves in the longest route to victory courtesy of the Java code written by Paul Baird-Smith.}
\end{figure}

In support of Conjecture \ref{conj:avggame}, we offer the graph in Figure \ref{fig:movesavg}. Using data from simulating the Zeckendorf game on varying $n$, we plot the average number of moves in a game against $n$. We observe that a best fit line with slope of around $1.2$ fits the data points well. Due to computer restraints, we are unable to provide data beyond $n = 200$ (not pictured in the graph, but included in the data). The average taken on $n =200$ is $239$, very close to $1.2 \cdot 200$.

\begin{figure}[h]
\includegraphics[]{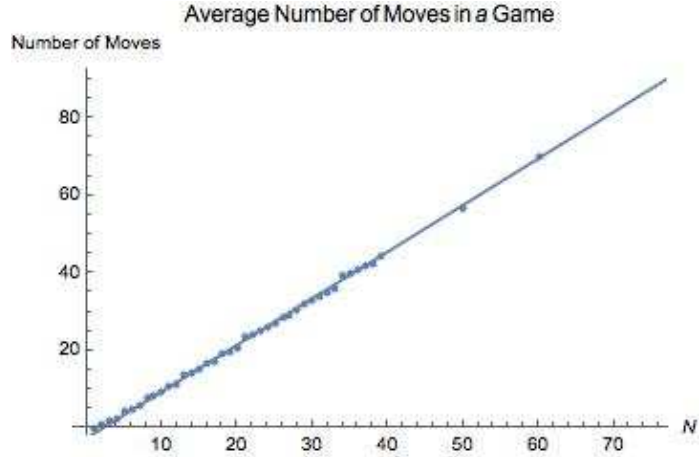}
\caption{Graph of the average number of moves in the Zeckendorf game with simulations ranging from 999 to 9,999 for varying $n$ with the best fit line over the data points.}
\label{fig:movesavg}
\end{figure}

\subsection{Winning Strategies}

Since someone must always make the final move, and the game always ends at the Zeckendorf decomposition, there are no ties. Therefore one player or the other has a winning strategy on each $n$. This section is devoted to the proof that Player 2 has the winning strategy for all $n >2$, the statement of Theorem \ref{thm:playertwowins}. For this proof, we use a visual aid provided in Figure \ref{fig:tree}.

\begin{figure}[h]
\includegraphics[scale=.8]{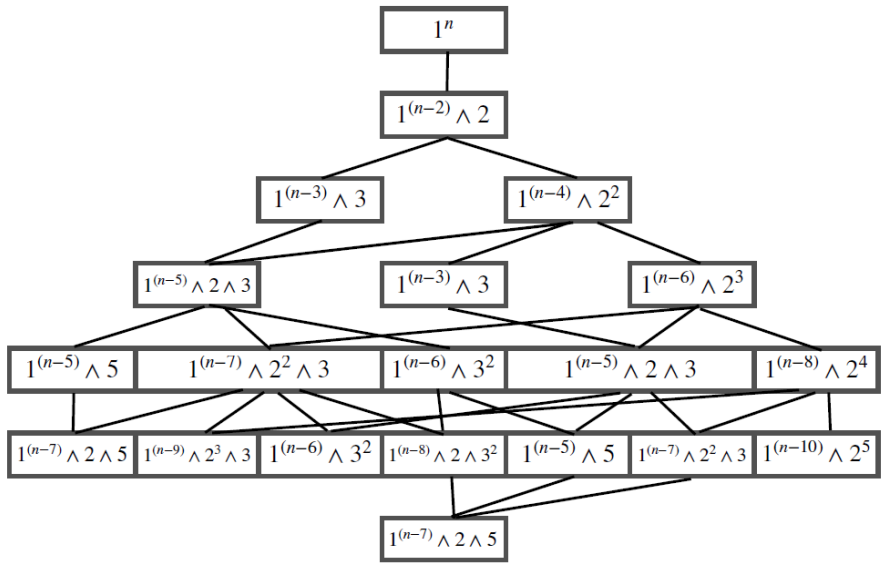}
\caption{Tree depicting the general structure of the first several moves of the Zeckendorf game.}
\label{fig:tree}
\end{figure}

\begin{proof}
Assume that Player 1 wins the game. Therefore, Player 1 must have the winning strategy from the node in the first row with the caption $\{1^{(n)}\}$. We color this node red in Figure \ref{fig:treeproof}. Since this node only has one child, Player 1 must have the winning strategy from $\{1^{(n-2)}\wedge 2\}$ in row two. Player 2 makes the next move, so Player 1 must have the winning strategy from both the nodes in row 3; if not, Player 2 would move to the one from which Player 1 did not have the winning strategy. We focus on the children of the node $\{1^{(n-3)}\wedge 3\}$ in row 3. This node has one descendant only; therefore $\{1^{(n-5)}\wedge 2\wedge 3\}$ in row 4 must have a winning strategy for Player 1. Player 2 makes the move next, so all three children of $\{1^{(n-5)}\wedge 2\wedge 3\}$ in row 5 must be a winning strategy for Player 1. Observe that one such child is $\{1^{(n-5)}\wedge 5\}$ in row 5. If Player 1 has the winning strategy from that node in row 5, if that node is on the next layer, in row 6, following the same winning strategy, Player 2 can win from the row 6 node $\{1^{(n-5)}\wedge 5\}$. So we color that node blue on row 6 of Figure \ref{fig:treeproof} to indicate Player 2 having a winning strategy. Since that node has only one child, $\{1^{(n-7)}\wedge 2 \wedge 5 \}$ in row 7, Player 2 must have a winning strategy from that node. This means that any parent of this node must be a winning strategy location for Player 2 because Player 2 could just move to $\{1^{(n-7)}\wedge 2 \wedge 5 \}$ in row 7 from those parents. This means that $\{1^{(n-8)}\wedge 2 \wedge 3^{(2)} \}$ in row 6 must have a winning strategy for Player 2; however, since both children in row 6 of $\{1^{(n-6)}\wedge 3^{(2)}\}$ in row 5 have winning strategies for Player 2, this means the row 5 node must be a winning strategy for Player 2, not Player 1 as we had earlier deduced. This leads to a contradiction that proves the claim for $n$ sufficiently large ($n \geq 9$). For the small cases of $2<n<9$, computer code such as the one referenced in Appendix \ref{sec:code} can show that Player 2 has the winning strategy by brute force.
\end{proof}

\begin{figure}[h]
\includegraphics[scale=.8]{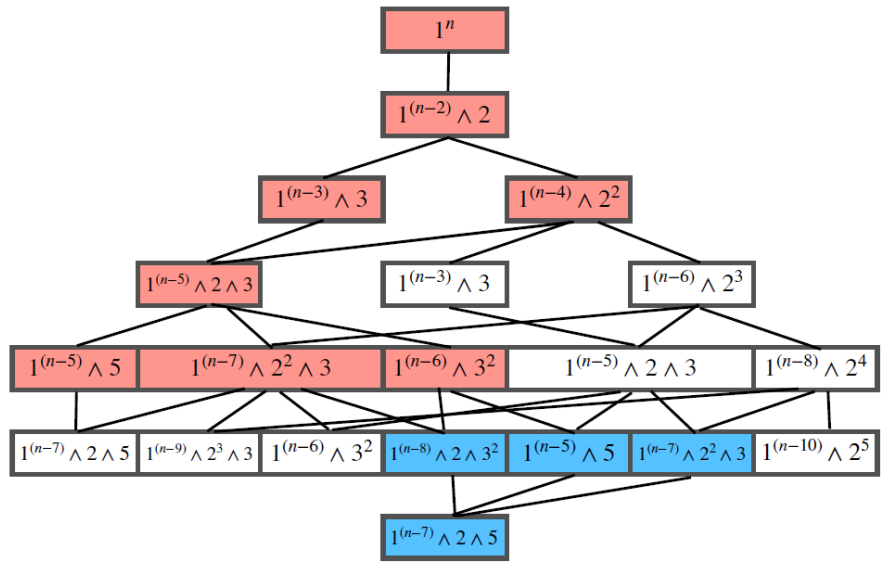}
\caption{Tree depicting the proof of Theorem \ref{thm:playertwowins}. Red boxes have a winning strategy for Player 1, and blue boxes indicate a winning strategy for Player 2.}
\label{fig:treeproof}
\end{figure}

This result is non-trivial and surprising. Game trees for large $n$ have many, many nodes, with no obvious path to victory for either player (see Figure \ref{fig:smalltree} for $n=9$ and Figure \ref{fig:bigtree} for $n=14$ for an example of how quickly the number of nodes grows). Additionally, this is merely an existence proof, which means we cannot tell how Player 2 should move to achieve his victory. This makes the game less rigged for human players; indeed, random simulations of the games show Players 1 and 2 winning roughly even amounts of the time.

\begin{figure}[h!]
\includegraphics[scale=.8]{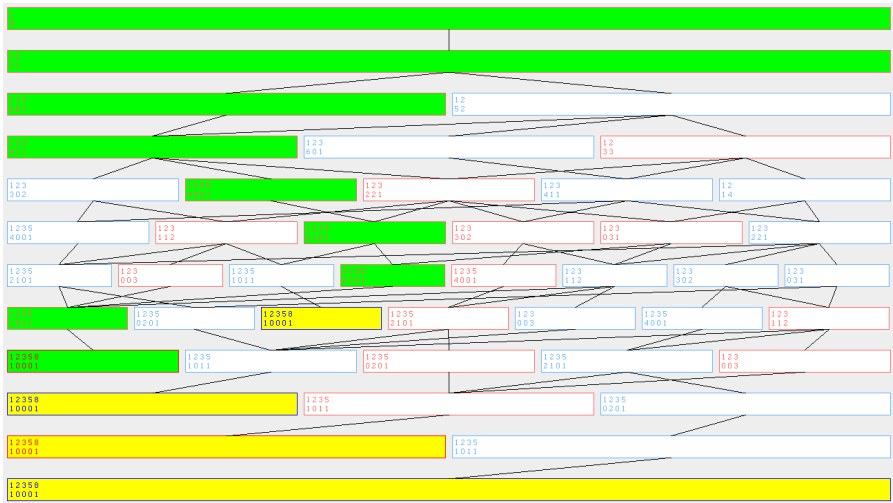}
\caption{Game tree for $n=9$, showing a winning path in green. Image courtesy of the code referenced in Appendix \ref{sec:code}.}
\label{fig:smalltree}
\end{figure}

\begin{figure}[h!]
\includegraphics[scale=.8]{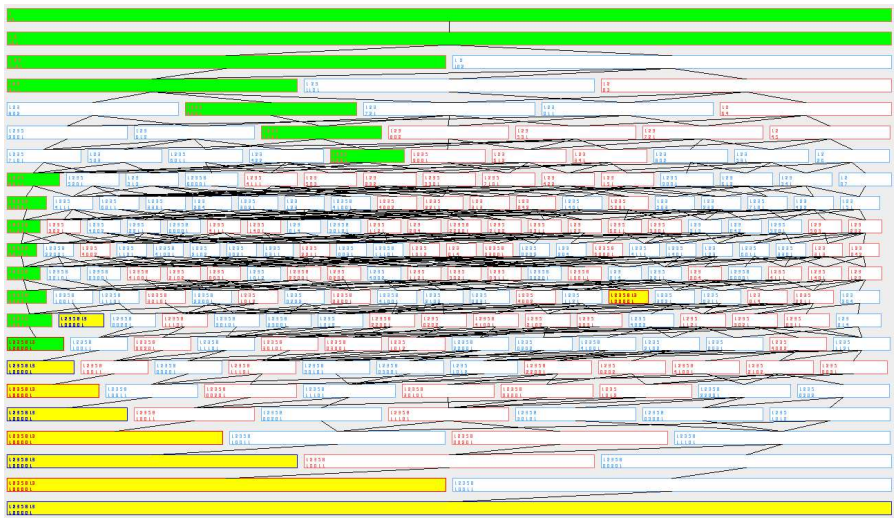}
\caption{Game tree for $n=14$, showing a winning path in green. Image courtesy of the code in Appendix \ref{sec:code}.}
\label{fig:bigtree}
\end{figure}


\section{Future Work}

There are many more ways that studies of this game can be extended. This paper covered the Zeckendorf Game quite extensively, but improved upper bounds may still be found on the number of moves in any game. This work also showed the existence of a winning strategy for player two for all $n>2$, but it does not show what that strategy is.

The Zeckendorf Game is on the Fibonacci recurrence; however, the fact that Zeckendorf's theorem generalizes means that the game could be played on other recurrences. Finding which classes of recurrences have meaningful games, bounding the moves on those games, and considering winning strategies are all fruitful avenues for further exploration.

Expanding in another direction, the Zeckendorf Game as conceived of by this thesis is a two-player game. What if more players want to join? Who wins in that case, for either the Generalized or regular Zeckendorf Game? The analysis done here only shows there is a winning strategy that takes an even number of moves for all $n>2$ for the Zeckendorf Game. It says nothing about the number of moves modulo $k$, where $k$ is odd and greater than 2!

\appendix


\section{Code}\label{sec:code}

Programs for  simulating a random version of the Zeckendorf game, running a deterministic worst game algorithm of the Zeckendorf game, and simulating a random Tribonacci Zeckendorf game is available at \begin{center}\bburl{github.com/paulbsmith1996/ZeckendorfGame/blob/master/ZeckGameMathematica.nb}. \end{center}.

TreeDrawer is used to give a visual representation of the tree structure of the
Zeckendorf game. It plays through a specified game,
determining all moves that can be made, and draw all possible paths to the end of
this game. The ReadMe file can be found at \bburl{https://github.com/paulbsmith1996/ZeckendorfGame}.
TreeDrawer can be executed, after compilation, by running the command
\begin{center}
appletviewer TreeDrawer.java
\end{center}
Do not delete the comment in the preamble, as this is used at runtime by the
appletviewer. Email paul.bairdsmith@gmail.com for more information.


\ \\

\end{document}